\newtheorem{theorem}{Theorem}[section]
\newtheorem{lemma}[theorem]{Lemma}
\newtheorem{proposition}[theorem]{Proposition}
\newtheorem{corollary}[theorem]{Corollary}
\newproof{proof}{Proof}
\begin{document}
	
\thispagestyle{empty}

\noindent	The version of record of this article, first published in Applied Numerical Mathematics, is available online at Publisher’s website:

\noindent \url{https://doi.org/10.1016/j.apnum.2022.06.012}

\newpage
\setcounter{page}{1}

\begin{frontmatter}

\title{A note on some bounds between cubic spline interpolants depending on the boundary conditions:
	Application to a monotonicity property}
\tnotetext[label1]{Funding: This research has been supported by grant PID2020-117211GB-I00 funded by MCIN/AEI/10.13039/501100011033.}

\author[UV1]{Antonio Baeza\corref{cor1}}
\ead{antonio.baeza@uv.es}
\author[UV1]{Dionisio F. Y\'a\~nez}
\ead{dionisio.yanez@uv.es}
\date{Received: date / Accepted: date}
\address[UV1]{Departament de Matem\`atiques,  Universitat de Val\`encia, 46100, Burjassot (Spain)}
\cortext[cor1]{Corresponding author.}
\begin{abstract}
In the context of cubic splines, the authors have contributed to a recent paper dealing with the
computation of nonlinear derivatives at the interior nodes so that monotonicity is enforced while keeping the order of
 approximation of the spline as high as possible.
	During the review process of that paper, one of the reviewers raised the question of whether a cubic spline
	interpolating monotone data could be forced to preserve monotonicity by imposing suitable values of the first
	derivative at the endpoints.
	  	Albeit a negative answer appears to be intuitive, we have found no results regarding this fact. In this short
	  	work we prove that the answer to that question is actually negative.
 \end{abstract}

\begin{keyword}
Cubic spline \sep boundary conditions \sep order of approximation \sep monotonicity
\end{keyword}

\end{frontmatter}

\section{Introduction}
Approximation techniques are used in applications as design of curves, surfaces, robotics, creation of pieces in industry

In many applications that use approximations of given data by means of smooth curves or surfaces, additional properties like monotonicity and convexity preservation are often required. In particular, interpolation by cubic splines is a
well-known technique that provides twice continuously differentiable curves. The problem consists on constructing a piecewise polynomial with equal first and second derivative values at the nodes, so that the interpolant is globally twice continuously differentiable.

In order to solve the problem, values for the first or second derivative at the boundary points are necessary. For natural splines, the values for the second derivative at the endpoints are set to zero, while for interpolating splines first derivative values are forced to coincide with those of the function derivative at the endpoint.

Splines do not preserve data monotonicity and consequently many works have dealt with the problem of monotonicity-preserving spline interpolation, being \cite{FrCar} a seminal paper on the matter.

The authors have participated in a contribution \cite{Arandigabaezayanez}  where a monotonicity-preserving reconstruction is proposed using nonlinear reconstructions of the
first derivatives at the interior points. During the review process of the paper one of the reviewers raised the following question:
{\it By changing the first derivative at the endpoints of the interval, is it possible to find twice continuously differentiable monotonicity preserving cubic spline interpolants?}

In this note we show that the answer to this question is negative. We first prove a bound on the distance between
 two splines that interpolate the same data but differ on the values imposed at the boundary.
   We then show that monotonicity preservation is not possible by simply changing these boundary data.
   The paper is organized as follows: in Section \ref{sec:notation} we introduce the notation and basic facts about cubic splines. In Section
   \ref{sec:main}
   we prove the main result
   and in Section \ref{sec:monotonicity} the main result of the paper stating that finding a monotonicity-preserving cubic spline by manipulating the derivative values at the interval endpoints is not always possible.

\section{Notation. Cubic splines in Hermite form}\label{sec:notation}

We use the same notation as in \cite{Arandigabaezayanez}. We start with the problem of piecewise cubic Hermite interpolation.  Let $x_{1}< x_{2} < \hdots< x_{n}$ be a partition of the interval
$[x_{1} ,x_{n}]$ and let $f_{i} = f(x_{i})$ be the values of a
certain function at the knots {$\{x_{i}\}_{i=1}^n$}. Given approximate values of the first derivative of $f$ at the knots{,} denoted by $\{\dot{f}_{i}\}_{i=1}^n,$ the problem is to construct a  piecewise cubic polynomial function $P(x),$ composed by $n-1$
cubic polynomial pieces $P_i(x)$ respectively defined on the intervals $[x_i,x_{i+1}],$ that satisfy
$P(x_i)=f_i$,  $P(x_{i+1})=f_{i+1}$, $P{'}(x_i)=\dot{f}_i$,  $P{'}(x_{i+1})=\dot{f}_{i+1}$, for $1\leq i < n$. If we denote as $\Delta f_{i} = f_{i+1} - f_{i}$ and
$m_{i} ={\Delta f_{i}}/{h_{i}}$, where
 $h_{i} = x_{i+1} -x_{i}$ are the mesh spacings and $\hat{h}=\max_{i=1,\hdots,n-1} (h_i)$, then, the $i$-th polynomial
$P_i(x),  x \in [x_{i},x_{i+1}]$ (see \cite{Boo01}
for details),
has the form
\begin{equation}\label{Hermite}
P_{i}(x) = c^i_{1} + c^i_{2} (x - x_{i}) + c^i_{3} (x-x_{i})^{2} +
c^i_{4}(x-x_{i})^{2}(x-x_{i+1}),
\end{equation}
where  $c^i_{1} := f_{i}$, $c^i_{2} := f[x_{i},x_{i}] = \dot{f}_{i}$,
 $c^i_{3} := f[x_{i},x_{i}, x_{i+1}]
= ( {m_{i} -\dot{f}_{i}} )/ {h_{i}}$, 
 $c^i_{4} := f[x_{i},x_{i},x_{i+1},x_{i+1}] =
( {\dot{f}_{i+1} + \dot{f}_{i} - 2 m_{i}} ) / {h_{i}^{2}}$.

Cubic splines are constructed by computing approximate derivative values that enforce continuity of the second derivative, by solving the following problem:
\begin{eqnarray}\label{splinesconditions1}
& &P_i''(x_{i+1})=P_{i+1}''(x_{i+1}),\\\label{splinesconditions2}
& &P_1'(x_1)=\dot{f}_1, \quad P_{n-1}'(x_n)=\dot{f}_n,
\end{eqnarray}
with $i=1,\hdots,n-2$, where $\dot{f}_1$ and $\dot{f}_n$ are given boundary conditions at the interval endpoints.

It is easy to get that {\eqref{splinesconditions1}--\eqref{splinesconditions2} corresponds to the following linear system} (see more details in \cite{Arandigabaezayanez}):
\begin{equation}
\label{eq:splind}
\left\{
\begin{array}{lc}
2\, \dot{f}_2+ \mu_1\, \dot{f}_3=3\, (\lambda_1 \, m_{1} +\mu_1 \, m_{2})- \lambda_1 \,\dot{f}_{1}=:b_1, & \\
\lambda_i \, \dot{f}_{i}+ 2 \, \dot{f}_{i+1}+ \mu_i \, \dot{f}_{i+2}=3\,(\lambda_{i} \, m_{i} +\mu_{i} \, m_{i+1})=:b_i,& i=2,\ldots,n-3, \\
\lambda_{n-2} \, \dot{f}_{n-2}+ 2 \, \dot{f}_{n-1}=3\, (\lambda_{n-2} \, m_{n-2} +\mu_{n-2} \, m_{n-1})-  \mu_{n-2} \, \dot{f}_{n}=:b_{n-2},&
  \\
\end{array}
\right.
\end{equation}
where $\lambda_i=\frac{h_{i+1}}{h_i+h_{i+1}}$,  $\mu_i=\frac{h_i}{h_i+h_{i+1}}$, $\lambda_i+\mu_i=1$, $i=1,\hdots,n-2$.

We can write the system in matrix form as
$A \dot F = B,$
with $\dot F=\left[\dot{f}_2,\dot{f}_3,\hdots,\dot{f}_{n-1}\right]^T$ and
\begin{equation}\label{splinesystem}
A=\left[\begin{array}{cccccc}
2&\mu_1&&&&\\
\lambda_2&2&\mu_2&&&\\
&\lambda_3&2&\mu_3&\\
&&\ddots&\ddots &\ddots&\\
&&&\lambda_{n-3}&2&\mu_{n-3}\\
&&&&\lambda_{n-2}&2\\
\end{array}
\right], B=\left[\begin{array}{c}
 3(\lambda_1 \, m_{1} +\mu_1 \, m_{2})-\lambda_1 \,\dot{f}_1\\
 3(\lambda_2 \, m_{2} +\mu_2 \, m_{3})\\
 3(\lambda_3 \, m_{3} +\mu_3 \, m_{4}) \\
\vdots
\\
 3(\lambda_{n-4} \, m_{n-4} +\mu_{n-4} \, m_{n-3})\\
 3(\lambda_{n-3} \, m_{n-3} +\mu_{n-3} \, m_{n-2})\\
 3(\lambda_{n-2} \, m_{n-2} +\mu_{n-2} \, m_{n-1})-\mu_{n-2} \dot{f}_n\\
\end{array}
\right]=\left[\begin{array}{c}
b_1\\
b_2 \\
b_3 \\
\vdots\\
b_{n-4}\\
b_{n-1}\\
b_{n-2}
\end{array}
\right].
\end{equation}
The system matrix is strictly diagonally dominant and hence non-singular. If we denote by $A^{-1}_{i,j}$, $1\leq i,j\leq n-2$ the {entries} of the matrix $A^{-1}$  we have
\begin{equation}
\dot f_i=\sum_{j=1}^{n-2}A^{-1}_{i-1,j}b_{j}, \quad 2\leq i\leq n-1.
\end{equation}
The structure of the entries of the matrix $A^{-1}$ is crucial to prove that the images of two splines with different conditions \eqref{splinesconditions2} become more and more similar as the points get far from the boundary. We state a result (a particular case of the result in \cite{kershaw1}) that provides a bound on the values $A^{-1}_{i,j}$.

\begin{lemma}\label{lemma2}
Let $0\leq \mu_i,\lambda_i\leq 1$ with $1\leq i\leq n-2$ be such that
$$\lambda_i+\mu_i=1\quad i=1,\hdots,n-2,$$
and let $A\in \mathbb{R}^{(n-2)\times (n-2)}$ be the matrix defined
 by \eqref{splinesystem}. Then $$0< (-1)^{i-j}A_{ij}^{-1}\leq \frac{2}{3}\cdot2^{-|i-j|},\quad 1\leq i,j\leq n-2. $$
\end{lemma}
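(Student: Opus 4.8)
\medskip

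The plan is to reduce the two-sided estimate to a statement about a \emph{nonnegative} matrix and then extract the geometric decay from a Neumann series, using a parity argument to recover the sharp constant $\tfrac23$. First I would strip off the alternating sign: with $D=\operatorname{diag}\big((-1)^{i-1}\big)_{i=1}^{n-2}$, the matrix $M:=DAD$ is again tridiagonal, now with $M_{ii}=2$, $M_{i,i+1}=-\mu_i$, $M_{i,i-1}=-\lambda_i$, and since $D^{-1}=D$ one has $M^{-1}_{ij}=(-1)^{i-j}A^{-1}_{ij}$. Thus it suffices to show $0<M^{-1}_{ij}\le\tfrac23\,2^{-|i-j|}$. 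Writing $M=2\big(I-\tfrac12N\big)$ with $N:=2I-M\ge0$ (the off-diagonal ``hopping'' matrix, $N_{i,i+1}=\mu_i$, $N_{i,i-1}=\lambda_i$), every row sum of $N$ is at most $\lambda_i+\mu_i=1$ (the two boundary rows sum to $\mu_1\le1$ and $\lambda_{n-2}\le1$), so $\|N\|_\infty\le1$ and the Neumann series $M^{-1}=\tfrac12\sum_{k\ge0}2^{-k}N^{k}$ converges.

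The decay would then come from two elementary facts about the powers $N^k$. Since $N\ge0$ we get $N^k\ge0$ (so $M^{-1}\ge0$) and $\|N^k\|_\infty\le\|N\|_\infty^{k}\le1$, hence $0\le(N^k)_{ij}\le1$ for all $i,j,k$. And since the only nonzero entries of $N$ join indices differing by exactly one, $(N^k)_{ij}=0$ unless $k\ge|i-j|$ and $k\equiv|i-j|\pmod2$; the parity restriction is seen at once from $ENE=-N$ with $E=\operatorname{diag}\big((-1)^i\big)$, which forces $(-1)^{i+j}(N^k)_{ij}=(-1)^k(N^k)_{ij}$. Plugging these into the series and summing the surviving terms (set $k=|i-j|+2m$) gives
\[
M^{-1}_{ij}=\tfrac12\!\!\sum_{\substack{k\ge|i-j|\\ k\equiv|i-j|\,(2)}}\!\!2^{-k}(N^k)_{ij}\ \le\ \tfrac12\,2^{-|i-j|}\sum_{m\ge0}4^{-m}=\tfrac23\,2^{-|i-j|},
\]
while the strict positivity follows from the single length-$|i-j|$ walk between $i$ and $j$, whose weight $\prod\mu_\bullet$ (or $\prod\lambda_\bullet$, depending on the sign of $i-j$) is positive, so $(N^{|i-j|})_{ij}>0$ and the corresponding term already makes $M^{-1}_{ij}>0$.

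The routine parts here are the conjugation identity $M=DAD$ and the geometric-series bookkeeping; the one point that needs a little care is the connectivity/parity structure of $N^k$ in the second paragraph, because it is exactly the constraint $k\equiv|i-j|\pmod2$ that upgrades the naive estimate $\sum_{k\ge|i-j|}2^{-k-1}=2^{-|i-j|}$ to the sharp $\tfrac23\,2^{-|i-j|}$. I would also flag that the strict lower bound genuinely uses $\lambda_i,\mu_i>0$ (if, e.g., $\mu_1=0$ the first equation decouples and $A^{-1}_{1j}=0$ for $j\neq1$); this mild non-degeneracy always holds for the spline matrix \eqref{splinesystem}, where $\lambda_i,\mu_i\in(0,1)$.

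\medskip
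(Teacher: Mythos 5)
Your proof is correct, and it is a genuinely different route from the paper's: the paper gives no proof at all for this lemma, simply citing it as a particular case of Kershaw's result, whose original argument runs through the determinant recurrences for leading and trailing principal minors of tridiagonal matrices and an induction on those recurrences. Your argument --- conjugating by $D=\operatorname{diag}((-1)^{i-1})$ to remove the sign alternation, writing $M=2(I-\tfrac12N)$ with $N\ge 0$ and $\|N\|_\infty\le 1$, and then exploiting the bandwidth and parity constraints on the walks counted by $(N^k)_{ij}$ --- is self-contained, more elementary, and makes transparent where the constant $\tfrac23$ comes from (a geometric series of ratio $\tfrac14$ after the parity restriction $k\equiv|i-j|\pmod 2$ kills half the terms). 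All the steps check out: the identity $M^{-1}_{ij}=(-1)^{i-j}A^{-1}_{ij}$, the entrywise bound $(N^k)_{ij}\le\|N^k\|_\infty\le 1$, the support/parity claim via $ENE=-N$, and the final summation. Your closing caveat is also well taken and worth keeping: with the hypothesis $0\le\mu_i,\lambda_i\le 1$ as literally stated the strict inequality $0<(-1)^{i-j}A^{-1}_{ij}$ can fail (e.g.\ $\mu_1=0$ decouples the first row), so strict positivity really uses $\lambda_i,\mu_i\in(0,1)$, which does hold for the spline matrix \eqref{splinesystem}; this is a small imprecision in the lemma's statement that your proof correctly isolates.
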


\section{Distance between two spline interpolations with different boundary conditions}\label{sec:main}

In this section we apply Lemma \ref{lemma2} to compute a bound on the distance between two splines constructed using different boundary values.

\begin{proposition}\label{prop1}
Let $P(x)$ and $\widetilde P(x)$ be the cubic spline interpolants of the form \eqref{Hermite} for the data $\{x_i, f(x_i), \dot{f}_i\}$ and  $\{x_i, f(x_i), \dot{\widetilde{f}}_i\}_{i=1}^n$ respectively, with $\dot{\widetilde{f}}_i = \dot{f}_i$ for $1< i < n$ (computed by solving system \eqref{splinesystem})
and $\dot{\widetilde{f}}_i \neq \dot{f}_i$ for $i=1, n$.  For $2\leq i\leq n-2$, $P_i(x)$ and $\widetilde P_i(x)$ denote the corresponding polynomial pieces
for each spline. Then the following inequality holds:
\begin{equation}\label{final}
|P_i(x)-\widetilde P_i(x)|\leq 8h_i(2^{-i}\lambda_1|\dot{\widetilde{f}}_1-\dot{f}_1|+\mu_{n-2}2^{i-n}|\dot{\widetilde{f}}_n-\dot{f}_n|), \,\,\forall \,x\in[x_i,x_{i+1}].
\end{equation}
\end{proposition}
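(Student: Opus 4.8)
The plan is to bound $|P_i(x) - \widetilde P_i(x)|$ by controlling each of the four Hermite coefficients $c^i_k$ versus $\widetilde c^i_k$ from \eqref{Hermite}. Since both splines interpolate the same data $\{x_i, f_i\}$, we have $c^i_1 = \widetilde c^i_1 = f_i$, so the difference $P_i(x) - \widetilde P_i(x)$ involves only the discrepancies in $c^i_2, c^i_3, c^i_4$. Writing $e_i := \dot{\widetilde f}_i - \dot f_i$, the formulas $c^i_2 = \dot f_i$, $c^i_3 = (m_i - \dot f_i)/h_i$, $c^i_4 = (\dot f_{i+1} + \dot f_i - 2m_i)/h_i^2$ give
$$
P_i(x) - \widetilde P_i(x) = -e_i(x-x_i) + \frac{e_i}{h_i}(x-x_i)^2 - \frac{e_{i+1}+e_i}{h_i^2}(x-x_i)^2(x-x_{i+1}).
$$
For $x \in [x_i, x_{i+1}]$ we have $|x-x_i| \le h_i$ and $|x-x_{i+1}| \le h_i$, so a crude triangle-inequality estimate yields something like $|P_i(x)-\widetilde P_i(x)| \le h_i|e_i| + h_i|e_i| + h_i(|e_i|+|e_{i+1}|) \le 3h_i|e_i| + h_i|e_{i+1}|$, hence $|P_i(x) - \widetilde P_i(x)| \le 4h_i \max(|e_i|, |e_{i+1}|)$ or so — the precise constant will fall out once I track the factors carefully.

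The second step is to estimate $|e_i|$ for the interior indices $2 \le i \le n-1$. Here the key observation is that $\dot F = A^{-1}B$ and $\dot{\widetilde F} = A^{-1}\widetilde B$, where $B$ and $\widetilde B$ differ only in their first and last components: $B_1 - \widetilde B_1 = -\lambda_1(\dot f_1 - \dot{\widetilde f}_1) = \lambda_1 e_1$ (with $e_1 = \dot{\widetilde f}_1 - \dot f_1$), and $B_{n-2} - \widetilde B_{n-2} = \mu_{n-2} e_n$, all other entries agreeing. Therefore $e_{i+1} = \dot{\widetilde f}_{i+1} - \dot f_{i+1} = (A^{-1}(\widetilde B - B))_i = A^{-1}_{i,1}\lambda_1 e_1 + A^{-1}_{i,n-2}\mu_{n-2}e_n$ for $1 \le i \le n-2$. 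Now invoke Lemma \ref{lemma2}: $|A^{-1}_{i,1}| \le \tfrac{2}{3}2^{-|i-1|} = \tfrac{2}{3}2^{-(i-1)}$ and $|A^{-1}_{i,n-2}| \le \tfrac{2}{3}2^{-|i-(n-2)|} = \tfrac{2}{3}2^{-(n-2-i)}$, giving
$$
|e_{i+1}| \le \tfrac{2}{3}\bigl(2^{-(i-1)}\lambda_1|e_1| + 2^{-(n-2-i)}\mu_{n-2}|e_n|\bigr), \qquad 1 \le i \le n-2.
$$

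The final step is bookkeeping: substitute the bounds for $|e_i|$ and $|e_{i+1}|$ into the coefficient estimate from step one, collect the geometric factors $2^{-i}$ and $2^{i-n}$, and check that the accumulated constants are dominated by the $8$ in \eqref{final}. Concretely, for the polynomial piece $P_i$ I need $|e_i|$ and $|e_{i+1}|$, i.e. the exponents $2^{-(i-2)}$, $2^{-(i-1)}$ on the $\lambda_1|e_1|$ side and $2^{-(n-i)}$, $2^{-(n-1-i)}$ on the $\mu_{n-2}|e_n|$ side; each of these is at most a small constant times $2^{-i}$ (resp. $2^{i-n}$), and summing the four contributions with their $\tfrac{2}{3}$ and $3h_i$/$h_i$ prefactors stays under $8h_i$. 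I do not expect any genuine obstacle here — the one place to be careful is keeping the index shift between $e_{i+1}$ and the row index of $A^{-1}$ straight, and making sure the constant-chasing in the last step is loose enough to land on exactly the stated bound rather than something slightly larger. The argument uses only Lemma \ref{lemma2} and elementary estimates, so no deeper input is needed.
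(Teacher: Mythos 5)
Your decomposition of $P_i-\widetilde P_i$ through the Hermite coefficients and your identity $e_{i+1}=-\left(A^{-1}_{i,1}\lambda_1 e_1+A^{-1}_{i,n-2}\mu_{n-2}e_n\right)$ are both correct (you have a sign slip in $\widetilde B-B$, harmless since only absolute values enter), and this is essentially the paper's own starting point. The problem is precisely the step you defer as "bookkeeping": the crude estimate $|P_i(x)-\widetilde P_i(x)|\le 3h_i|e_i|+h_i|e_{i+1}|$ does \emph{not} stay under the stated bound. Plugging in $|e_i|\le \tfrac{2}{3}\left(4\cdot 2^{-i}\lambda_1|e_1|+2\cdot 2^{i-n}\mu_{n-2}|e_n|\right)$ and $|e_{i+1}|\le \tfrac{2}{3}\left(2\cdot 2^{-i}\lambda_1|e_1|+4\cdot 2^{i-n}\mu_{n-2}|e_n|\right)$ gives a coefficient $3\cdot\tfrac{8}{3}+\tfrac{4}{3}=\tfrac{28}{3}>8$ on the $2^{-i}\lambda_1|e_1|$ term (and $\tfrac{32}{3}$ if you instead use $4h_i\max(|e_i|,|e_{i+1}|)$). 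So the constant-chasing you postponed is exactly where the argument, as written, breaks.

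The fix is easy and keeps your route intact: do not bound the three monomials separately, but group by $e_i$ and $e_{i+1}$. Setting $t=(x-x_i)/h_i\in[0,1]$, your expression collapses to $P_i(x)-\widetilde P_i(x)=h_i\left(-e_i\,t(1-t)^2+e_{i+1}\,t^2(1-t)\right)$, and since $0\le t(1-t)^2\le \tfrac{4}{27}$ and $0\le t^2(1-t)\le\tfrac{4}{27}$ on $[0,1]$, this yields $|P_i(x)-\widetilde P_i(x)|\le \tfrac{4}{27}h_i\left(|e_i|+|e_{i+1}|\right)\le \tfrac{16}{27}h_i\left(2^{-i}\lambda_1|e_1|+2^{i-n}\mu_{n-2}|e_n|\right)$, comfortably below the stated $8h_i(\cdots)$. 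The repaired argument is genuinely different from the paper's, and in fact sharper: the paper keeps the monomials separate but exploits the sign alternation $(-1)^{i-j}A^{-1}_{ij}>0$ from Lemma \ref{lemma2} to replace sums such as $A^{-1}_{i-1,1}+A^{-1}_{i,1}$ by a single entry, and its product $3h_i\cdot\tfrac{8}{3}2^{-i}$ is what produces exactly the constant $8$; your version needs only the magnitude bound of Lemma \ref{lemma2}, not the sign information.
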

\begin{proof}
Let us start calculating
\begin{equation}
b_1-\widetilde{b}_1= \lambda_1(\dot{\widetilde{f}}_1-\dot{f}_1),\quad b_i-\widetilde{b}_i=0,\,\, 2\leq i \leq n-1,\quad b_{n-2}-\widetilde{b}_{n-2}= \mu_{n-2}(\dot{\widetilde{f}}_n-\dot{f}_n).
\end{equation}
Then, for $2\leq i\leq n-1$ it holds
\begin{equation}\label{eq:disfp}
\dot f_i-\dot{\widetilde{f}}_i=\sum_{j=1}^{n-2}A^{-1}_{i-1,j}b_{j}-\sum_{j=1}^{n-2}A^{-1}_{i-1,j}\widetilde{b}_{j}=\sum_{j=1}^{n-2}A^{-1}_{i-1,j}(b_j-\widetilde{b}_j)=
A^{-1}_{i-1,1}\lambda_1(\dot{\widetilde{f}}_1-\dot{f}_1) + A^{-1}_{i-1,n-2}\mu_{n-2}(\dot{\widetilde{f}}_n-\dot{f}_n).
\end{equation}
\begin{equation*}
(\dot f_{i+1}-\dot{\widetilde{f}}_{i+1})+(\dot f_i-\dot{\widetilde{f}}_i)=
(A^{-1}_{i-1,1}+A^{-1}_{i,1})\lambda_1(\dot{\widetilde{f}}_1-\dot{f}_1) + (A^{-1}_{i-1,n-2}+A^{-1}_{i,n-2})\mu_{n-2}(\dot{\widetilde{f}}_n-\dot{f}_n), \quad 2\leq i\leq n-2.
\end{equation*}
The distance between $P_i(x)$ and $\widetilde P_i(x)$ for all $x\in[x_i,x_{i+1}]$, $i=2,\hdots, n-2$, can be obtained with:
\begin{equation*}
\begin{split}
P_i(x)-\widetilde P_i(x)=&(\dot f_i-\dot{\widetilde{f}}_i)(x-x_i)-(\dot f_i-\dot{\widetilde{f}}_i)\frac{(x-x_i)^2}{h_i}+\left((\dot f_{i+1}-\dot{\widetilde{f}}_{i+1})+(\dot f_i-\dot{\widetilde{f}}_i)\right)\frac{(x-x_i)^2(x-x_{i+1})}{h^2_i}\\
=&\lambda_1(\dot{\widetilde{f}}_1-\dot{f}_1)(x-x_i)\left(A^{-1}_{i-1,1}+A^{-1}_{i-1,1}\frac{(x-x_i)}{h_i}+(A^{-1}_{i-1,1}+A^{-1}_{i,1})\frac{(x-x_i)(x-x_{i+1})}{h^2_i} \right)\\
+&\mu_{n-2}(\dot{\widetilde{f}}_n-\dot{f}_n)(x-x_i)\left(A^{-1}_{i-1,n-2}+A^{-1}_{i-1,n-2}\frac{(x-x_i)}{h_i}+(A^{-1}_{i-1,n-2}+A^{-1}_{i,n-2})\frac{(x-x_i)(x-x_{i+1})}{h^2_i} \right).
\end{split}
\end{equation*}
From $|A^{-1}_{i-1,1}+A^{-1}_{i,1}|\leq |A^{-1}_{i-1,1}|$, $|A^{-1}_{i-1,n-2}+A^{-1}_{i,n-2}|\leq |A^{-1}_{i,n-2}|$ and $|A^{-1}_{i-1,n-2}|\leq |A^{-1}_{i,n-2}|$,  $i=2,\hdots, n-2$ by Lemma \ref{lemma2}, we get:
\begin{equation*}
\begin{split}
|P_i(x)-\widetilde P_i(x)|\leq &\lambda_1|A^{-1}_{i-1,1}||\dot{\widetilde{f}}_1-\dot{f}_1|(x-x_i)\left(1+\frac{(x-x_i)}{h_i}+\frac{(x-x_i)(x_{i+1}-x)}{h^2_i} \right)\\
+&\mu_{n-2}|A^{-1}_{i,n-2}||\dot{\widetilde{f}}_n-\dot{f}_n|(x-x_i)\left(1+\frac{(x-x_i)}{h_i}+\frac{(x-x_i)(x_{i+1}-x)}{h^2_i} \right)\\
=&(\lambda_1|A^{-1}_{i-1,1}||\dot{\widetilde{f}}_1-\dot{f}_1|+\mu_{n-2}|A^{-1}_{i,n-2}||\dot{\widetilde{f}}_n-\dot{f}_n|)\left((x-x_i)+\frac{(x-x_i)^2}{h_i}+\frac{(x-x_i)^2(x_{i+1}-x)}{h^2_i} \right).
\end{split}
\end{equation*}
Finally, by Lemma \ref{lemma2} it holds $|A^{-1}_{i-1,1}|\leq \frac{8}{3}2^{-i}$ and $|A^{-1}_{i,n-2}|\leq \frac{8}{3}2^{i-n}$. Then:
\begin{equation}
\begin{split}
|P_i(x)-\widetilde P_i(x)|\leq &8h_i(2^{-i}\lambda_1|\dot{\widetilde{f}}_1-\dot{f}_1|+\mu_{n-2}2^{i-n}|\dot{\widetilde{f}}_n-\dot{f}_n|).
\end{split}
\end{equation}
\end{proof}

As a consequence of Proposition \ref{prop1} we can prove the following result regarding the order of approximation of the spline
 (known result, showed e.g. in \cite{kershaw3}).
\begin{corollary}
Assume that $\hat h < 1$, $f(x)\in C^{4}([x_1,x_n])$ and  let $L>0$ be such that  $|f^{(4)}(x)|\leq L$, for all $x\in[x_1,x_n]$. Let $P(x)$ be the interpolating spline of
$f$ and $\widetilde{P}$ be another cubic spline constructed with the same data as $P(x)$ except for the first derivative values at the interval endpoints that can take any value.  If there exist $K,p>0$, such that
$\hat{h}/ h_j \leq K$ for all $j=1,\hdots,n-1$  and $\Omega:=\{i\in \mathbb{N}:-p\log_2(\hat{h})<i<n+p\log_2(\hat{h})\}\neq \emptyset$ then
$$\max_{x\in[x_i,x_{i+1}],i\in\Omega}|\widetilde{P}_i(x)-f(x)|=  O(\hat{h}^{\min\{p+1,4\}}). $$
\end{corollary}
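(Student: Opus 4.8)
The plan is to combine Proposition \ref{prop1} with the classical error estimate for the standard interpolating cubic spline. Let $P(x)$ be the interpolating spline of $f$ (i.e. the one with $\dot f_1=f'(x_1)$, $\dot f_n=f'(x_n)$), and let $\widetilde P(x)$ be the competitor spline, which shares all interior data with $P$ but may use arbitrary endpoint derivative values $\dot{\widetilde f}_1,\dot{\widetilde f}_n$. By the triangle inequality,
$$
|\widetilde P_i(x)-f(x)|\le |\widetilde P_i(x)-P_i(x)|+|P_i(x)-f(x)|,
$$
so it suffices to bound the two terms separately on the index set $\Omega$.

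For the second term, I would invoke the known $O(\hat h^4)$ estimate for the interpolating cubic spline: since $f\in C^4$ with $|f^{(4)}|\le L$, one has $\max_{x}|P(x)-f(x)|=O(\hat h^4)$ (this is the result attributed to \cite{kershaw3}); the hypotheses $\hat h<1$ and $\hat h/h_j\le K$ ensure the mesh is quasi-uniform so that the estimate applies uniformly with a constant depending only on $L$ and $K$. This contributes the ``$4$'' in the exponent $\min\{p+1,4\}$.

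For the first term, I would apply Proposition \ref{prop1}: for $2\le i\le n-2$ and $x\in[x_i,x_{i+1}]$,
$$
|P_i(x)-\widetilde P_i(x)|\le 8h_i\bigl(2^{-i}\lambda_1|\dot{\widetilde f}_1-\dot f_1|+\mu_{n-2}2^{i-n}|\dot{\widetilde f}_n-\dot f_n|\bigr).
$$
The key observation is that the $a\ priori$ bound on the derivative discrepancies at the endpoints is $O(1)$ — more precisely, using $f\in C^1$ one gets $|\dot f_1|,|\dot f_n|$ bounded, and $\dot{\widetilde f}_1,\dot{\widetilde f}_n$ are, by assumption in the statement, allowed to take ``any value'', so to get a uniform statement I would either assume (or note that the $\Omega$-restriction is exactly designed to absorb) the fact that one only needs these to be bounded; in any case, $h_i\le\hat h$, $\lambda_1,\mu_{n-2}\le 1$, and for $i\in\Omega$ we have $i>-p\log_2\hat h$, hence $2^{-i}<2^{p\log_2\hat h}=\hat h^{\,p}$, and symmetrically $2^{i-n}<\hat h^{\,p}$ from $i<n+p\log_2\hat h$. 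Therefore $|P_i(x)-\widetilde P_i(x)|=O(\hat h\cdot\hat h^{\,p})=O(\hat h^{\,p+1})$ uniformly for $i\in\Omega$, which contributes the ``$p+1$'' in the exponent. Adding the two bounds gives $\max_{x\in[x_i,x_{i+1}],\,i\in\Omega}|\widetilde P_i(x)-f(x)|=O(\hat h^{\min\{p+1,4\}})$.

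The main obstacle, and the step requiring the most care, is the bookkeeping of the exponential-in-index factors $2^{-i}$ and $2^{i-n}$ against the definition of $\Omega$: one must check that the two inequalities defining $\Omega$ translate correctly (note $\log_2\hat h<0$ since $\hat h<1$, so the inequalities flip appropriately) into $2^{-i}\le\hat h^{\,p}$ and $2^{i-n}\le\hat h^{\,p}$, and that the constant hidden in the $O$ notation depends only on $L$, $K$, and the (assumed bounded) magnitude of the endpoint derivative values, not on $n$ or the particular point $x$. A secondary point is simply citing, rather than reproving, the standard $C^4$ spline error bound, and making sure the quasi-uniformity hypothesis $\hat h/h_j\le K$ is what that citation requires.
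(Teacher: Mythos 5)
Your proposal is correct and follows essentially the same route as the paper's proof: split $|\widetilde P_i(x)-f(x)|$ via the triangle inequality, invoke the classical $O(\hat h^4)$ bound for the clamped interpolating spline, and use Proposition \ref{prop1} together with the observation that $i\in\Omega$ forces $2^{-i}<\hat h^{\,p}$ and $2^{i-n}<\hat h^{\,p}$ to get the $O(\hat h^{\,p+1})$ contribution. Your remark that the endpoint derivative discrepancies must be treated as fixed (so they only enter the implied constant) is a point the paper leaves implicit, but it does not change the argument.
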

\begin{proof}
If $P$ is the spline interpolator constructed with the solution of system $A \dot F = B$ defined in \eqref{splinesystem} with clamped boundary conditions $\dot{f}_1=f'(x_1)$, $\dot{f}_n=f'(x_n)$ then it is known \cite{Stoerbulirsch} that
$$\max_{x\in[x_i,x_{i+1}]}|{P}_i(x)-f(x)|=  O(\hat{h}^4).$$
On the other hand, $i\in \Omega$ implies $2^{-i}< \hat{h}^p$ and $2^{n-i}< \hat{h}^p$. The result hence follows from  Proposition \ref{prop1}.
\end{proof}

\section{Main result}\label{sec:monotonicity}

In this section we show that given a cubic spline that does not preserve data monotonicity, is it not always possible to obtain another cubic spline that does by only changing the first derivative at the endpoints. We recall following theorem \cite{FrCar}:

\begin{theorem}(Necessary conditions {for monotonicity}.)
\label{FrCarl}
Let $P_{i}$ be a monotone cubic Hermite interpolant
of the data $\{(x_{i},f_{i}, \dot f_i), (x_{i+1},f_{i+1},\dot f_{i+1})\}$. Then:
\begin{equation}\label{signos}
sign(\dot{f}_{i}) = sign(\dot{f}_{i+1}) = sign(m_{i}).
\end{equation}
Furthermore, if $m_{i} = 0$ then $P_{i}$ is monotone
(constant)
if and only if  $\dot{f}_{i} = \dot{f}_{i+1} = 0 $.
\end{theorem}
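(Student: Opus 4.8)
The plan is to work with the single Hermite piece $P_i$ on $[x_i,x_{i+1}]$ through the representation \eqref{Hermite}, using only two elementary facts: the derivative of a monotone $C^1$ function does not change sign on the interval, and a monotone function taking equal values at the two endpoints of an interval is constant there.

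First I would read off from \eqref{Hermite} the quantities that carry the signs. Evaluating at the left endpoint gives $P_i'(x_i)=c^i_2=\dot f_i$; differentiating \eqref{Hermite} and evaluating at $x_{i+1}$ (equivalently, just recalling that $P_i$ is the Hermite interpolant) gives $P_i'(x_{i+1})=\dot f_{i+1}$; and $P_i(x_{i+1})-P_i(x_i)=f_{i+1}-f_i=h_i m_i$ with $h_i>0$, so $\mathrm{sign}(m_i)$ is the sign of the increment of $P_i$ across the interval. Now suppose $P_i$ is monotone; after possibly replacing $(P_i,f)$ by $(-P_i,-f)$ we may assume it is non‑decreasing. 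Then $P_i'\ge 0$ on $[x_i,x_{i+1}]$, hence $\dot f_i=P_i'(x_i)\ge 0$ and $\dot f_{i+1}=P_i'(x_{i+1})\ge 0$, while $h_i m_i=\int_{x_i}^{x_{i+1}}P_i'\ge 0$ forces $m_i\ge 0$; the non‑increasing case is symmetric. This gives the sign compatibility asserted in \eqref{signos}.

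For the second assertion, if $m_i=0$ then $f_i=f_{i+1}$, so $P_i(x_i)=P_i(x_{i+1})$; a monotone function with equal endpoint values is constant on $[x_i,x_{i+1}]$, hence $P_i'\equiv 0$ and in particular $\dot f_i=\dot f_{i+1}=0$. Conversely, if $m_i=0$ and $\dot f_i=\dot f_{i+1}=0$, the coefficient formulas give $c^i_2=\dot f_i=0$, $c^i_3=(m_i-\dot f_i)/h_i=0$ and $c^i_4=(\dot f_{i+1}+\dot f_i-2m_i)/h_i^2=0$, so $P_i\equiv f_i$ is constant and therefore trivially monotone.

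Every step here is a one‑line computation, so there is no substantial obstacle; the only point I would be careful about is the precise reading of \eqref{signos}. Strictly speaking a piece can be (even strictly) increasing with a vanishing endpoint derivative — e.g.\ $\dot f_i=0$, $\dot f_{i+1}=m_i>0$ yields $P_i'\ge 0$ on $[x_i,x_{i+1}]$, vanishing only at $x_i$ — so the fully rigorous form of the necessary condition is the pair of inequalities $\dot f_i m_i\ge 0$ and $\dot f_{i+1} m_i\ge 0$ (equivalently: $\dot f_i$, $\dot f_{i+1}$, $m_i$ are never of strictly opposite signs), and this is the form of \eqref{signos} I would actually prove and invoke in Section~\ref{sec:monotonicity}.
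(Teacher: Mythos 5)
Your argument is correct and complete. Note that the paper itself offers no proof of this statement: it is simply recalled from Fritsch and Carlson's paper, so there is no in-text argument to compare against; your elementary derivation (endpoint derivatives of a monotone $C^1$ piece are weakly of the monotonicity sign, the increment $h_im_i=\int_{x_i}^{x_{i+1}}P_i'$ carries the same sign, and a monotone function with equal endpoint values is constant) is exactly the standard route and all the one-line computations check out, including the verification that $c^i_2=c^i_3=c^i_4=0$ when $m_i=\dot f_i=\dot f_{i+1}=0$. Your closing caveat is also well taken and worth keeping: as stated, \eqref{signos} is literally false in the degenerate case $\dot f_i=0<m_i$ (your example $P_i'(x)=m_i\,t(4-3t)$ with $t=(x-x_i)/h_i$ is nonnegative on $[0,1]$, so $P_i$ is monotone while $\mathrm{sign}(\dot f_i)\neq\mathrm{sign}(m_i)$), and the honest form of the necessary condition is the pair of weak inequalities $\dot f_i m_i\ge 0$, $\dot f_{i+1}m_i\ge 0$. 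This looser reading is in fact the only one used later in Section~\ref{sec:monotonicity}, where the theorem is invoked merely to conclude $\dot f_j\ge 0$ for nondecreasing data, so your reformulation is fully compatible with the rest of the paper.
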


We prove the main proposition of the paper.

\begin{proposition}\label{prop:1}
Let $n$ be an odd number and $\{(x_{i},f_i)\}_{i=1}^n$, with $f_1\leq f_2\leq \hdots\leq f_n$  be the data used to construct a spline $P$ by solving the system given in Eq. \eqref{splinesystem} with given boundary data $\dot{f}_1$, $\dot{f}_n$ which satisfies:
\begin{itemize}
\item $P$ is monotone in $[x_1,x_2]$ and $[x_{n-1},x_n]$.
\item
There exists $i_0\in \mathbb{N}$ with $\max(1, 1-\log_2(R_1))\leq i_0 <\min(n-2, n-3+\log_2(R_n))$ such that
$
P_{i_0}(x_{i_0+\frac{1}{2}})-f_{i_0+1}>0,
$ being $x_{i_0+\frac{1}{2}}=x_{i_0}+\frac{h_{i_0}}{2}$ ,
$R_1=\frac{P_{i_0}(x_{i_0+\frac{1}{2}})-f_{i_0+1}}{8K_1\lambda_1}$
with
\begin{equation*}
K_1= 3 (A^{-1}_{1,1}(\lambda_1 \, m_{1} +\mu_1 \, m_{2}) + A^{-1}_{1,n-2}(\lambda_{n-2} \, m_{n-2} +\mu_{n-2} \, m_{n-1})) + \sum_{j=2}^{n-3}A^{-1}_{1,j}b_{j}+4m_1,
\end{equation*}
and
$R_n=\frac{P_{i_0}(x_{i_0+\frac{1}{2}})-f_{i_0+1}}{8\mu_{n-2}K_n}$
with
\begin{equation*}
K_n= 3 (A^{-1}_{n-2,1}(\lambda_1 \, m_{1} +\mu_1 \, m_{2}) + A^{-1}_{n-2,n-2}(\lambda_{n-2} \, m_{n-2} +\mu_{n-2} \, m_{n-1})) + \sum_{j=2}^{n-3}A^{-1}_{n-2,j}b_{j}+4m_{n-1}.
\end{equation*}
\end{itemize}

If a new spline $\widetilde{P}$ is constructed with the
same data as $P$ but changing the values $\dot{f}_1$ and $\dot{f}_n$
by $\dot{\widetilde{f}}_1$ and $\dot{\widetilde{f}}_n$ then $\widetilde{P}$
is not monotone in $[x_1,x_2]$ or $[x_{n-1},x_n]$ or
$\left(\widetilde{P}_{i_0}(x_{i_0+\frac{1}{2}})-f_{i_0+1}\right)>0$,
i.e., the new spline is not monotone.

\end{proposition}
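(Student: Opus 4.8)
The plan is to argue by cases on whether $\widetilde P$ is monotone on the two extreme subintervals. If $\widetilde P$ is not monotone on $[x_1,x_2]$ or on $[x_{n-1},x_n]$ there is nothing to prove, so I would assume $\widetilde P$ is monotone on both and show that then necessarily $\widetilde P_{i_0}(x_{i_0+\frac{1}{2}})-f_{i_0+1}>0$, the third alternative. (Note that the hypotheses force $2\le i_0\le n-3$: one cannot have $i_0=1$ together with $P$ monotone on $[x_1,x_2]$ and $P_{i_0}(x_{i_0+\frac{1}{2}})-f_{i_0+1}>0$, and likewise $i_0\ne n-1$, so Proposition~\ref{prop1} does apply at $i=i_0$.)

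The heart of the proof, and the step I expect to be the real obstacle, is the pair of endpoint bounds
\[
|\dot{\widetilde f}_1-\dot f_1|\le K_1,\qquad |\dot{\widetilde f}_n-\dot f_n|\le K_n .
\]
I would start from the elementary identity, read off \eqref{Hermite}, that every cubic Hermite piece satisfies $P_i(x_i+\frac{h_i}{2})-f_{i+1}=\frac{h_i}{8}(\dot f_i-4m_i-\dot f_{i+1})$. Monotonicity of $\widetilde P$ on $[x_1,x_2]$ (necessarily nondecreasing, as $f_1\le f_2$) then forces $\dot{\widetilde f}_1-4m_1-\dot{\widetilde f}_2\le 0$ (from $\widetilde P_1(x_1+\frac{h_1}{2})\le f_2$) together with $\dot{\widetilde f}_1\ge 0$ (Theorem~\ref{FrCarl}); symmetrically, monotonicity on $[x_{n-1},x_n]$ forces $\dot{\widetilde f}_n-4m_{n-1}-\dot{\widetilde f}_{n-1}\le 0$ and $\dot{\widetilde f}_n\ge 0$. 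Next I would express the recomputed interior derivatives through the inverse matrix, $\dot{\widetilde f}_2=\sum_{j=1}^{n-2}A^{-1}_{1,j}\widetilde b_j$ and $\dot{\widetilde f}_{n-1}=\sum_{j=1}^{n-2}A^{-1}_{n-2,j}\widetilde b_j$, with $\widetilde b_1=3(\lambda_1 m_1+\mu_1 m_2)-\lambda_1\dot{\widetilde f}_1$, $\widetilde b_{n-2}=3(\lambda_{n-2}m_{n-2}+\mu_{n-2}m_{n-1})-\mu_{n-2}\dot{\widetilde f}_n$ and $\widetilde b_j=b_j$ for $2\le j\le n-3$; the very definitions of $K_1,K_n$ then give
\[
\dot{\widetilde f}_2=(K_1-4m_1)-\lambda_1 A^{-1}_{1,1}\dot{\widetilde f}_1-\mu_{n-2}A^{-1}_{1,n-2}\dot{\widetilde f}_n,\quad
\dot{\widetilde f}_{n-1}=(K_n-4m_{n-1})-\lambda_1 A^{-1}_{n-2,1}\dot{\widetilde f}_1-\mu_{n-2}A^{-1}_{n-2,n-2}\dot{\widetilde f}_n .
\]
Inserting these into the two midpoint inequalities and rearranging,
\[
(1+\lambda_1 A^{-1}_{1,1})\,\dot{\widetilde f}_1\le K_1-\mu_{n-2}A^{-1}_{1,n-2}\dot{\widetilde f}_n,\qquad
(1+\mu_{n-2}A^{-1}_{n-2,n-2})\,\dot{\widetilde f}_n\le K_n-\lambda_1 A^{-1}_{n-2,1}\dot{\widetilde f}_1 .
\]
This is where the oddness of $n$ is decisive: by Lemma~\ref{lemma2} the diagonal entries $A^{-1}_{1,1},A^{-1}_{n-2,n-2}$ are positive and the corner entries $A^{-1}_{1,n-2},A^{-1}_{n-2,1}$ have sign $(-1)^{n-3}=+1$, hence are positive as well, so that (using $\dot{\widetilde f}_1,\dot{\widetilde f}_n\ge 0$) the subtracted terms on the right are nonnegative and can be discarded, giving $0\le\dot{\widetilde f}_1\le K_1$ and $0\le\dot{\widetilde f}_n\le K_n$. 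Running the same computation for $P$ (monotone at both ends by hypothesis) yields $0\le\dot f_1\le K_1$ and $0\le\dot f_n\le K_n$ — in particular $K_1,K_n\ge 0$ — and the two bounds follow. (If $K_1=0$ the argument forces $\dot{\widetilde f}_1=\dot f_1$, so that endpoint contributes nothing; likewise if $K_n=0$.)

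The remaining part is bookkeeping. Proposition~\ref{prop1} at $i=i_0$, $x=x_{i_0+\frac{1}{2}}$ combined with the two bounds gives
\[
\bigl|P_{i_0}(x_{i_0+\frac{1}{2}})-\widetilde P_{i_0}(x_{i_0+\frac{1}{2}})\bigr|\le 8h_{i_0}\bigl(2^{-i_0}\lambda_1 K_1+2^{i_0-n}\mu_{n-2}K_n\bigr).
\]
Writing $\delta:=P_{i_0}(x_{i_0+\frac{1}{2}})-f_{i_0+1}>0$ and unwinding $R_1=\delta/(8\lambda_1 K_1)$, $R_n=\delta/(8\mu_{n-2}K_n)$, the hypothesis $i_0\ge 1-\log_2 R_1$ is the same as $\delta\ge 16\,\lambda_1 K_1\,2^{-i_0}$ and $i_0<n-3+\log_2 R_n$ the same as $\delta>64\,\mu_{n-2}K_n\,2^{i_0-n}$; hence the right-hand side above is at most $\frac{h_{i_0}}{2}\delta+\frac{h_{i_0}}{8}\delta<\delta$ (here one uses $h_{i_0}\le\hat h<1$). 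Therefore $\widetilde P_{i_0}(x_{i_0+\frac{1}{2}})-f_{i_0+1}\ge\delta-\bigl|P_{i_0}(x_{i_0+\frac{1}{2}})-\widetilde P_{i_0}(x_{i_0+\frac{1}{2}})\bigr|>0$, which is the third alternative, and the proposition follows. In this last step the only care needed is to line up the powers of $2$ and the constant $8$ appearing in Proposition~\ref{prop1} with those packaged inside $R_1$ and $R_n$, and to absorb the mesh factor $h_{i_0}$ by means of $\hat h<1$.
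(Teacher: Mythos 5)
Your proposal is correct in substance and follows essentially the same strategy as the paper's proof. The part you identify as the heart of the argument --- deriving $0\le\dot{\widetilde f}_1\le K_1$ and $0\le\dot{\widetilde f}_n\le K_n$ from monotonicity of $\widetilde P$ on the two end intervals via the midpoint identity $P_i(x_{i+\frac12})-f_{i+1}=\frac{h_i}{8}(\dot f_i-4m_i-\dot f_{i+1})$, Theorem~\ref{FrCarl}, and the positivity of the corner entries $A^{-1}_{1,1},A^{-1}_{1,n-2},A^{-1}_{n-2,1},A^{-1}_{n-2,n-2}$ guaranteed by Lemma~\ref{lemma2} when $n$ is odd --- is exactly the paper's computation, merely rearranged (the paper first bounds $\dot{\widetilde f}_2\le k_1$ by discarding the nonpositive terms and then writes $\dot{\widetilde f}_1\le\dot{\widetilde f}_2+4m_1\le K_1$; your version with the prefactor $1+\lambda_1A^{-1}_{1,1}$ is equivalent). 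Your observation that the hypotheses force $2\le i_0\le n-3$ is a useful point that the paper leaves implicit.

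The one place you genuinely diverge is the final step, and it is where a small gap appears. The paper does \emph{not} invoke Proposition~\ref{prop1}; it returns to \eqref{eq:disfp}, perturbs the two derivatives $\dot f_{i_0},\dot f_{i_0+1}$ entering the midpoint formula, and obtains $\widetilde P_{i_0}(x_{i_0+\frac12})-f_{i_0+1}\ge\varepsilon-\tfrac{2\varepsilon}{3}$ directly; the constants $12$ hidden in the hypotheses on $i_0$ (i.e.\ in $R_1,R_n$) are tuned to that computation, in which the polynomial factor at the midpoint is $\tfrac{h_{i_0}}{8}$ rather than the interval-wide worst case. By instead using Proposition~\ref{prop1} as a black box you inherit its coarse factor $8h_{i_0}$, your estimate becomes $\tfrac58 h_{i_0}\delta$, and you are forced to assume $h_{i_0}\le\hat h<1$ to conclude. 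That assumption is a hypothesis of the Corollary but \emph{not} of this Proposition, so as written your argument proves the statement only under the extra condition $h_{i_0}<8/5$. To close the gap, evaluate the perturbation at the midpoint directly as the paper does (to be fair, the paper's own display silently drops the $\tfrac{h_{i_0}}{8}$ prefactor on the correction terms, so its constants are also loose, but the direct evaluation is the route that avoids importing a mesh-size restriction into the statement).
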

\begin{proof}
As $f_1\leq \hdots \leq f_n$, in order to obtain a monotone spline we have that any
approximation to the first derivative at any node has to be higher or equal to 0 by
 Theorem \ref{FrCarl}. By Lemma \ref{lemma2} it holds $A^{-1}_{1,1},A^{-1}_{1,n-2}>0$ since $n$
  is an odd number. Thus, we get
\begin{equation*}
\begin{split}
\dot{f}_2=&\sum_{j=1}^{n-2}A^{-1}_{1,j}b_{j}=A^{-1}_{1,1}b_1 +A^{-1}_{1,n-2}b_{n-2}+\sum_{j=2}^{n-3}A^{-1}_{1,j}b_{j} \\
=& A^{-1}_{1,1}(3\, (\lambda_1 \, m_{1} +\mu_1 \, m_{2})- \lambda_1 \,\dot{f}_{1})+ A^{-1}_{1,n-2}(3\, (\lambda_{n-2} \, m_{n-2} +\mu_{n-2} \, m_{n-1})
-  \mu_{n-2} \, \dot{f}_{n}) + \sum_{j=2}^{n-3}A^{-1}_{1,j}b_{j}\\
\leq& 3 (A^{-1}_{1,1}(\lambda_1 \, m_{1} +\mu_1 \, m_{2}) + A^{-1}_{1,n-2}(\lambda_{n-2} \, m_{n-2} +\mu_{n-2} \, m_{n-1})) + \sum_{j=2}^{n-3}A^{-1}_{1,j}b_{j}=:k_1
\end{split}
\end{equation*}
As $P_1$ is monotone in $[x_1,x_2]$ then
\begin{equation*}
\begin{split}
& f_1\leq P_1(x_{1+\frac{1}{2}})=\frac{f_1+f_2}{2}+\frac{h_1}{8}\left(\dot{f}_1-\dot{f}_2\right)\leq f_2 \Rightarrow  \dot{f}_2+4\left(\frac{f_1-f_2}{h_1}\right) \leq \dot{f}_1 \leq \dot{f}_2+4\left(\frac{f_2-f_1}{h_1}\right) \Rightarrow \\
& 0 \leq \dot{f}_1 \leq \dot{f}_2+4m_1\leq k_1+4m_1=:K_1
\end{split}
\end{equation*}
Analogously:
\begin{equation*}
\begin{split}
\dot{f}_{n-1}=&\sum_{j=1}^{n-2}A^{-1}_{n-2,j}b_{j}=A^{-1}_{n-2,1}b_1 +A^{-1}_{n-2,n-2}b_{n-2}+\sum_{j=2}^{n-3}A^{-1}_{n-2,j}b_{j} \\
=& A^{-1}_{n-2,1}(3\, (\lambda_1 \, m_{1} +\mu_1 \, m_{2})- \lambda_1 \,\dot{f}_{1})+ A^{-1}_{n-2,n-2}(3\, (\lambda_{n-2} \, m_{n-2} +\mu_{n-2} \, m_{n-1})
-  \mu_{n-2} \, \dot{f}_{n}) + \sum_{j=2}^{n-3}A^{-1}_{n-2,j}b_{j}\\
\leq& 3 (A^{-1}_{n-2,1}(\lambda_1 \, m_{1} +\mu_1 \, m_{2}) + A^{-1}_{n-2,n-2}(\lambda_{n-2} \, m_{n-2} +\mu_{n-2} \, m_{n-1})) + \sum_{j=2}^{n-3}A^{-1}_{n-2,j}b_{j}=:k_{n},
\end{split}
\end{equation*}
as $P_{n-1}$ is monotone in $[x_{n-1},x_n]$ then
\begin{equation*}
\begin{split}
f_{n-1}&\leq P_{n-1}(x_{n-\frac{1}{2}})=\frac{f_{n-1}+f_{n}}{2}+\frac{h_{n-1}}{8}\left(\dot{f}_{n-1}-\dot{f}_n\right)\leq f_n \\
&\Rightarrow  \dot{f}_{n-1}+4\left(\frac{f_{n-1}-f_{n}}{h_{n-1}}\right) \leq \dot{f}_n \leq \dot{f}_{n-1}+4\left(\frac{f_n-f_{n-1}}{h_{n-1}}\right) \\
&\Rightarrow 
 0 \leq \dot{f}_n \leq \dot{f}_{n-1}+4m_{n-1}\leq k_n+4m_{n-1}=:K_n.
\end{split}
\end{equation*}
The same argument applies to any spline $\widetilde{P}$ that is monotone in $[x_1,x_2]$ and $[x_{n-1},x_n]$ and then the same conditions have to be satisfied by $\widetilde{P}$  i.e.
$0 \leq \dot{\widetilde{f}}_1 \leq K_1$, $0 \leq \dot{\widetilde{f}}_n \leq K_n$. By hypothesis we know that
$$\frac{f_{i_0}+f_{i_0+1}}{2}+\frac{h_{i_0}}{8}\left(\dot{f}_{i_0}-\dot{f}_{i_0+1}\right)=P_{i_0}(x_{i_0+\frac{1}{2}})>f_{i_0+1}.$$
We take
	$$\varepsilon:=P_{i_0}(x_{i_0+\frac{1}{2}})-f_{i_0+1}= \frac{f_{i_0}-f_{i_0+1}}{2}+\frac{h_{i_0}}{8}\left(\dot{f}_{i_0}-\dot{f}_{i_0+1}\right)>0$$
	and by Lemma \ref{lemma2}, from $1-\log_2(R_1)\leq i_0 <n-2$ and $n-3\geq i_0-\log_2(R_n)$ we obtain:
\begin{equation*}
|A_{i_0,1}^{-1}|\leq \frac{2}{3}\cdot2^{-|i_0-1|}\leq \frac{\varepsilon}{12\lambda_1K_1},\quad |A_{i_0+1,n-2}^{-1}|\leq \frac{2}{3}\cdot2^{-|n-3-i_0|}\leq \frac{\varepsilon}{12\mu_{n-2}K_n}.
\end{equation*}
If we denote as $\dot{\widetilde{f}}_{i_0}$, $\dot{\widetilde{f}}_{i_0+1}$ the new values obtained by changing the first derivative values $\dot{f}_1$ and $\dot{f}_n$ by $\dot{\widetilde{f}}_{1}$ and $\dot{\widetilde{f}}_{n}$, then, according to \eqref{eq:disfp} the distance with the original derivative approximation is:
\begin{equation*}
\dot{\widetilde{f}}_{i_0}-\dot{f}_{i_0}=A^{-1}_{i_0,1}\lambda_1({f}'_{1}-\dot{\widetilde{f}}_{1})+A^{-1}_{i_0,n-2}\mu_{n-2}({f}'_{n}-\dot{\widetilde{f}}_{n});\quad \dot{\widetilde{f}}_{i_0+1}-\dot{f}_{i_0+1}=A^{-1}_{i_0+1,1}\lambda_1({f}'_{1}-\dot{\widetilde{f}}_{1})+A^{-1}_{i_0+1,n-2}\mu_{n-2}({f}'_{n}-\dot{\widetilde{f}}_{n}),
\end{equation*}
Finally,
\begin{equation*}
\begin{split}
\widetilde{P}_{i_0}(x_{i_0+\frac{1}{2}})-f_{i_0+1}=&\frac{f_{i_0}-f_{i_0+1}}{2}+\frac{h_{i_0}}{8}\left(\dot{\widetilde{f}}_{i_0}-\dot{\widetilde{f}}_{i_0+1}\right)
= \frac{f_{i_0}-f_{i_0+1}}{2}+\frac{h_{i_0}}{8}\left(\dot{f}_{i_0}-\dot{f}_{i_0+1}\right) \\&+( A^{-1}_{i_0,1}-A^{-1}_{i_0+1,1})\lambda_1({f}'_{1}-\dot{\widetilde{f}}_{1})+(A^{-1}_{i_0,n-2}-A^{-1}_{i_0+1,n-2})\mu_{n-2}({f}'_{n}-\dot{\widetilde{f}}_{n})\\
&=\varepsilon+( A^{-1}_{i_0,1}-A^{-1}_{i_0+1,1})\lambda_1({f}'_{1}-\dot{\widetilde{f}}_{1})+(A^{-1}_{i_0,n-2}-A^{-1}_{i_0+1,n-2})\mu_{n-2}({f}'_{n}-\dot{\widetilde{f}}_{n})\\
&\geq\varepsilon-4|A^{-1}_{i_0,1}|\lambda_1K_1-4|A^{-1}_{i_0+1,n-2}|\mu_{n-2}K_n\\
&\geq\varepsilon-\frac{4\lambda_1K_1}{12\lambda_1K_1}\varepsilon-\frac{4\mu_{n-2}K_n}{12\mu_{n-2}K_n}\varepsilon=\varepsilon-\frac{2\varepsilon}{3}=\frac{\varepsilon}{3}>0.
\end{split}
\end{equation*}
\end{proof}

As a conclusion we remark that the form of the
matrix $A$ in \eqref{splinesystem} and its inverse
lead to splines that are close to each other
in intervals far from the boundary when different values for the first derivatives are set at the endpoints.

{\bf Acknowlegments:} We thank the reviewer of \cite{Arandigabaezayanez} who raised the question that inspired this paper.

\bibliography{yanez3}

\end{document}